\theoremstyle{plain}
 \newtheorem{theorem}{Theorem}[section]
 \newtheorem{lemma}[theorem]{Lemma}
  \newtheorem{proposition}{Proposition}[section]
    \newtheorem{corollary}{Corollary}[section]
\theoremstyle{definition}
 \newcommand{\M}{\mathcal{M}}
 \newcommand{\T}{\mathcal{T}}
 \newcommand{\Mod}{\mathrm{Mod}}
 \newcommand{\sys}{\mathrm{sys}}
 \newcommand{\arcsinh}{\mathrm{arcsinh}}
\begin{document}

\mainmatter

%------------------------------------------------
% Insert the title of the chapter and (if 
% necessary) a short title for the running head.
%------------------------------------------------

\chapter[The topological types of length bounded multicurves]{The topological types of length bounded multicurves}

%------------------------------------------------
% Insert full names of the authors.
% Abbreviate first names for the running head.
%------------------------------------------------

\author{Hugo Parlier}
\authormark{H. Parlier}

%------------------------------------------------
% Insert 2020 AMS Classification and keywords.
%------------------------------------------------

\keywords{hyperbolic surfaces, curves, Teichm\"uller spaces, moduli spaces}
\subjclass{Primary 32G15, 52K20; secondary 53C22, 30F60}

%------------------------------------------------
% Any dedication should be added here.
%------------------------------------------------

%\begin{dedication}
%For Norbert A'Campo, and his never ending enthusiasm for life and mathematics.
%\end{dedication}

%------------------------------------------------
% Insert your abstract. It should not contain 
% any references, as e.g. "[1]" is meaningless 
% in case the abstract appears separated from
% the rest of the chapter/book.
%------------------------------------------------

\begin{abstract}
This article discusses inequalities on lengths of curves on hyperbolic surfaces. In particular, a characterization is given of which topological types of curves and multicurves always have a representative that satisfies a length inequality that holds over all of moduli space.
\end{abstract}

%------------------------------------------------
% Insert the body of the chapter here.
%------------------------------------------------

\section{Introduction}

Length inequalities for curves play an important role in the understanding of hyperbolic surfaces and their moduli spaces. A prime example of this is a theorem of Bers, which states that any closed hyperbolic surface admits a pants decomposition of length bounded by a constant which only depends on the topology of the surface, and not its geometry. Short pants decompositions have been very useful in the understanding of the underlying Teichm\"uller space, and its large to medium scale geometry. This result generalizes bounds on the length of the shortest non-trivial closed curve (the systole). These are both examples of families of curves or multicurves that admit upper bounds over Teichm\"uller or moduli space of a given topological surface. More specifically, these results tell you that any hyperbolic surface of given topology admits a curve, or a multicurve, taken among a family of topological types, and which satisfies a certain length bound.

The goal of this short note is to characterize which families of curves or multicurves admit such upper bounds on their lengths. By multicurve, we mean a finite union of curves, all of them considered up to free homotopy, and by length we mean the length of a minimum length, thus geodesic, representative. The bounds are only allowed to depend on the topology of the underlying surface. Because of the nature of the game, we are only interested in the topological type of the curves, or said differently, in its mapping class group orbit. In order to try and satisfy the length inequality, we are allowed to choose curves of minimal length in the orbit. 

The result is stated in terms of length functions. To a multicurve, we can associate the function, which takes a hyperbolic surface in the Teichm\"uller $\T(\Sigma)$ of a finite-type surface orientable $\Sigma$, and which associates to it the length of the (unique) geodesic in the free homotopy class of the multicurve. The existence of the upper bounds described depends on whether certain length functions are bounded over Teichm\"uller or moduli space.

\begin{theorem}\label{thm:main}
Let $\Gamma$ be a multicurve on $\Sigma$. Then the quantity
$$\max_{X\in \T(\Sigma)} \min_{\phi \in \Mod(\Sigma)} \ell_{\phi(\Gamma)}(X)$$
is finite if and only if, for every pants decomposition $P$, there exists $\phi$ in the mapping class group  $\Mod(\Sigma)$ such that $i(P, \phi(\Gamma))) = 0$.
\end{theorem}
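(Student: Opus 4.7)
The plan is to establish the two directions of the equivalence separately, each via a single classical ingredient from hyperbolic geometry: Bers' theorem on short pants decompositions for the ``if'' direction, and the collar lemma for the ``only if'' direction.

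For the ``if'' direction, fix an arbitrary $X \in \T(\Sigma)$ and let $P_X$ be a pants decomposition whose total length is bounded by the Bers constant $B(\Sigma)$, which depends only on the topology of $\Sigma$. By the hypothesis applied to $P_X$, there exists $\phi \in \Mod(\Sigma)$ with $i(P_X, \phi(\Gamma)) = 0$. Since every essential simple closed curve contained in a pair of pants is peripheral, each component of $\phi(\Gamma)$ is freely homotopic to some curve of $P_X$, and hence has geodesic length at most $B(\Sigma)$ on $X$. If $N$ denotes the number of components of $\Gamma$, then $\ell_{\phi(\Gamma)}(X) \leq N \cdot B(\Sigma)$, a bound uniform in $X$, which gives finiteness.

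For the ``only if'' direction, I would argue the contrapositive. Suppose there is a pants decomposition $P = \{\alpha_1, \ldots, \alpha_{3g-3+n}\}$ such that $i(P, \phi(\Gamma)) > 0$ for every $\phi \in \Mod(\Sigma)$. Using Fenchel--Nielsen coordinates adapted to $P$, I build a family of hyperbolic structures $X_\epsilon$ on which each $\alpha_i$ has length $\epsilon$ (twist parameters irrelevant). The collar lemma then provides pairwise disjoint embedded collars around the $\alpha_i$ of width $w(\epsilon) := \arcsinh(1/\sinh(\epsilon/2))$, and $w(\epsilon) \to \infty$ as $\epsilon \to 0$. For any $\phi$, the standing hypothesis forces some component of $\phi(\Gamma)$ to cross some $\alpha_i$, and its geodesic representative must traverse that collar transversally, so it has length at least $2w(\epsilon)$. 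Therefore $\min_{\phi \in \Mod(\Sigma)} \ell_{\phi(\Gamma)}(X_\epsilon) \geq 2w(\epsilon) \to \infty$ as $\epsilon \to 0$, and the max-min is infinite.

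The main technical point to be careful about is the quantifier structure rather than any deep analytic difficulty. In the ``if'' direction, the chosen $\phi$ depends on $X$ through $P_X$, which is permitted because the outer minimum is taken pointwise over $X$. In the ``only if'' direction, the strength of the hypothesis is that \emph{every} $\phi$ produces a component of $\phi(\Gamma)$ crossing the pinched multicurve, so no clever choice of representative can escape the collar bound. The conceptual heart of the theorem is the observation that disjointness from a pants decomposition exactly characterizes being a sub-multicurve of it; this is what makes pants decompositions precisely the right family to test against, and it is also what makes it essential in the ``only if'' direction that $P$ be a full pants decomposition rather than some smaller multicurve, since otherwise pinching would leave a nontrivial subsurface on which $\phi(\Gamma)$ could be rearranged to remain short.
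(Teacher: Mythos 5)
Your ``only if'' direction is sound and matches the paper's argument: pinch a pants decomposition that every $\phi(\Gamma)$ must cross, and invoke the collar lemma to force the length to infinity. The quantifier bookkeeping you flag is also correct.

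However, the ``if'' direction has a genuine gap. You write: ``Since every essential simple closed curve contained in a pair of pants is peripheral, each component of $\phi(\Gamma)$ is freely homotopic to some curve of $P_X$.'' This step implicitly assumes that the components of $\Gamma$ are simple closed curves. The paper's definition of a multicurve does \emph{not} require simplicity: a curve is any continuous image of a circle up to free homotopy, essential but possibly self-intersecting. A non-simple closed curve in a pair of pants (for instance, a figure-eight going around two of the cuffs) is disjoint from $P$ in the sense that $i(P,\phi(\Gamma))=0$, yet it is not peripheral and its length is not controlled by the cuff lengths via homotopy to a cuff. Your closing remark that ``disjointness from a pants decomposition exactly characterizes being a sub-multicurve of it'' is precisely the place where simplicity sneaks in; it is false for general multicurves. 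This is exactly why the paper proves the Pants Stretching Lemma (Lemma \ref{lem:stretchpants}) and its Corollary \ref{cor:longpants}: the length of \emph{any} fixed homotopy class of closed curve on a pair of pants is monotone in the cuff lengths, so once Bers' theorem caps the cuffs at $B(\Sigma)$, the length of each (possibly non-simple) component of $\phi(\Gamma)$ living inside a pair of pants is bounded by its length on the pair of pants $Y_{B,B,B}$. Without some such monotonicity or comparison statement, your bound $\ell_{\phi(\Gamma)}(X) \leq N\cdot B(\Sigma)$ simply does not follow. Your argument is correct in the special case where $\Gamma$ is a simple multicurve, but the theorem as stated is more general, and the pants stretching lemma is the essential extra ingredient.
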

For simplicity, it is stated only for multicurves but more generally it holds for families of multicurves (see Theorem \ref{thm:maingeneral}). In more colloquial terms, what the result says is that if you are given a type of multicurve, then the function that associates to any hyperbolic surface the multicurve of minimal length of that type, is a bounded function over moduli space if and only if there is a multicurve of that type disjoint from any pants decomposition. 

Note that, given a lower bound on the systole, by compactness of pinched moduli space, there is similar conditional length inequality statement (Proposition \ref{prop:conditionallength}) that holds for any multicurve, but the implied constants depend on topology, curve type and the lower bound of the systole.

Note that here we are only interested in the very first value in a subset of the length spectrum. This is obviously very different from recent results about the asymptotic growth of curves of a given type (see \cite{Erlandsson-Souto} and references therein). This is also very different from the related problems of finding precise constants, and in particular, exploring surfaces that are extremal for different geometric quantities. The focus here is on understanding what type of inequalities are possible. This is of course inspired by the many uses that have been made of these, or related, inequalities in the study of the large or medium scale of Teichm\"uller spaces with different metrics \cite{Brock,Cavendish-Parlier} for the Weil-Petersson metric, \cite{Rafi,Rafi-Tao,Papadopoulos-Theret} for the Teichm\"uller and Thurston metrics. 

Finally, note that this note is about upper bounds of length functions. If one replaces the $\max$ with a $\min$ in the above theorem, the quantity is strictly positive if and only if the multicurve has intersection (coming from a non-simple closed curve or pairwise intersecting curves). This is a consequence of the collar lemma \cite{Keen} and generalisations \cite{BasmajianCollar}. Lower bounds that depend on curve type have been studied in some detail by Basmajian \cite{BasmajianUniversal}.\\

\noindent{\bf Organization.} The article is organized as follows. The second section is mainly notation and definitions, and includes Proposition \ref{prop:conditionallength}. The third and final section is dedicated to Theorem \ref{thm:maingeneral} and ends with a discussion of which previously known length inequalities fall within its framework.

\section{Preliminaries and setup}
Throughout $\Sigma$ will be an orientable finite-type surface with $\chi(\Sigma) <0$. $\Sigma$ is entirely determined by its genus $g$ and number of ends $n$, and $\chi(\Sigma) = 2-2g -n$.The space of marked complete hyperbolic structures on $\Sigma$, that is Teichm\"uller space, will be denoted $\T(\Sigma)$, and should be thought of as a continuous deformation space of hyperbolic metrics. For the purpose of simplicity, we ask the metrics to be geodesically complete, and so the ends of $X\in \T(\Sigma)$ are realized as cusps. The underlying moduli space, that is the space of hyperbolic structures on $\Sigma$ up to isometry, will be denoted $\M(\Sigma)$. $\Mod(\Sigma)$ will be the (full) mapping class group of $\Sigma$, that is the group of self-homeomorphisms of $\Sigma$ up to isotopy. This group acts on $\T(\Sigma)$ and its quotient is $\M(\Sigma)$. 

Formally a curve is the continuous image of a circle on $\Sigma$, but we will only be interested in a curve up to free homotopy. In particular, we only consider essential curves, that is those non-homotopic to a point or a boundary. A multicurve is a (finite) collection of curves. Associated to a multicurve $\Gamma$ is a function which associates to $X\in \T(\Sigma)$ the quantity $\ell_\Gamma(X)$, the length of the unique geodesic representative of $\Gamma$ on $X$. These length functions are continuous, analytic and in fact convex \cite{Kerckhoff, Wolpert}. 

Intersection between curves is defined as minimal intersection among representatives, and is denoted $i(\cdot,\cdot)$. A curve is simple if it has no self-intersections. A pants decomposition is a maximal collection of disjoint and distinct simple closed curves, and decomposes the surface into three-holed spheres (pairs of pants). The boundary curves of a pair of pants are sometimes called cuffs.

The following result \cite{Keen} is stated here in non-quantitative terms for simple closed geodesics. Note that a version also holds for non-simple closed geodesics as well \cite{BasmajianCollar}, with the notable difference being that you cannot pinch a non-simple closed curve. 

\begin{lemma}[Collar lemma]\label{lem:collar}
A simple closed geodesic of length $\ell$ on a hyperbolic surface $X$ admits a cylindrical neighborhood (its collar) of positive width $w(\ell)$ which only depends on $\ell$ and such that $w(\ell) \to \infty$ when $\ell \to 0$. 
\end{lemma}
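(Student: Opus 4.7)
The plan is to pass to the universal cover $\mathbb{H}^2$, where $X = \mathbb{H}^2/\Gamma$ for a torsion-free Fuchsian group $\Gamma$, and compute the collar width explicitly. Lift $\gamma$ to a complete geodesic $\tilde\gamma \subset \mathbb{H}^2$; its stabilizer in $\Gamma$ is cyclic, generated by a hyperbolic element $T$ of translation length $\ell$. A tubular neighborhood of $\gamma$ of width $w$ in $X$ is embedded (as a cylinder) if and only if the metric $w$-neighborhood $N_w(\tilde\gamma) \subset \mathbb{H}^2$ is disjoint from each $\Gamma$-translate $N_w(g\tilde\gamma)$ for $g \notin \langle T \rangle$. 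So the task reduces to producing a $w = w(\ell) > 0$ that works for every such $g$.

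The first observation, which uses the hypothesis that $\gamma$ is simple, is that for any $g \in \Gamma \setminus \langle T \rangle$ the geodesics $\tilde\gamma$ and $g\tilde\gamma$ are disjoint in $\mathbb{H}^2$: if they crossed, their projections would give a self-intersection of $\gamma$. Hence $\tilde\gamma$ and $g\tilde\gamma$ have a well-defined common perpendicular of some length $d(g) > 0$, and the embeddedness condition becomes $d(g) \geq 2w$ for all such $g$. The whole game is therefore to bound $d(g)$ from below in terms of $\ell$ alone.

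For this bound I would argue via discreteness and hyperbolic trigonometry. The two elements $T$ and $T' := g T g^{-1}$ are hyperbolic of translation length $\ell$, with disjoint axes at distance $d(g)$; they generate a rank-two subgroup of $\Gamma$ acting freely on $\mathbb{H}^2$. The standard displacement identity in a right-angled hexagon (equivalently, the trace formula for $TT'^{\pm 1}$) expresses $\cosh \ell(TT'^{\pm 1})/2$ in terms of $\ell$ and $d(g)$; ruling out elliptic elements and forcing $TT'^{\pm 1}$ to be non-trivial in the discrete group $\Gamma$ gives the classical inequality
$$\sinh\!\bigl(d(g)/2\bigr)\,\sinh(\ell/2) \;\geq\; 1,$$
so $d(g) \geq 2\,\arcsinh\!\bigl(1/\sinh(\ell/2)\bigr)$. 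Setting $w(\ell) := \arcsinh\!\bigl(1/\sinh(\ell/2)\bigr)$ then produces an embedded collar of this width, and since $\sinh(\ell/2) \to 0$ as $\ell \to 0$ we get $w(\ell) \to \infty$, as required.

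The main obstacle is the trigonometric/discreteness step giving the sharp inequality on $d(g)$: one has to analyze the two-generator subgroup $\langle T, T' \rangle$ carefully and invoke the right-angled hexagon identity to convert the geometric configuration of the two disjoint axes into the desired bound. Everything else—the lift, the use of simplicity, and the asymptotic behavior of $w(\ell)$—is essentially formal once this key inequality is in hand.
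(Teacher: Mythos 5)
The paper does not prove this lemma; it cites it as a classical result of Keen, with a quantitative version found, e.g., in Buser's book. Your sketch is the standard proof of that cited result, and it is essentially correct: lift to $\mathbb{H}^2$, use simplicity of $\gamma$ to show that distinct lifts are pairwise disjoint, reduce embeddedness of the tube to a uniform lower bound on the distance $d(g)$ between $\tilde\gamma$ and its translates, and obtain the sharp bound $\sinh(d(g)/2)\sinh(\ell/2)\geq 1$ from discreteness plus hyperbolic trigonometry. The resulting $w(\ell)=\arcsinh\bigl(1/\sinh(\ell/2)\bigr)$ is exactly the classical collar half-width, and the asymptotics as $\ell\to 0$ are as claimed.

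One small point worth closing: before you speak of a ``well-defined common perpendicular'' between $\tilde\gamma$ and $g\tilde\gamma$, you must exclude the degenerate possibility that the two lifts are disjoint but share an ideal endpoint (in which case no common perpendicular exists and $\inf d = 0$). This is ruled out precisely because $\Gamma$ is discrete: two hyperbolic elements of a Fuchsian group with exactly one common fixed point generate a non-discrete group. Likewise, your reduction tacitly uses that the stabilizer of $\tilde\gamma$ in $\Gamma$ is exactly $\langle T\rangle$, which holds because $\Gamma$ is torsion-free and $T$ is primitive (the geodesic $\gamma$, not a proper power, realizes the length $\ell$). With those two remarks in place, the argument is complete and matches the standard literature proof that the paper is implicitly invoking.
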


Our main use of the above result will be to consider hyperbolic structures where curves of a pants decomposition have length tending towards $0$, and hence all curves that are not among the cuffs of the given pants decomposition have length that tend to infinity. 

Given $X\in \T(\Sigma)$, the length of its shortest essential curve is its systole and is denoted $\sys(X)$. Unless $X$ is a pair of pants, the systole is realized by a simple closed geodesic. For $\epsilon>0$, the $\epsilon$-thick part of Teichm\"uller space is the subset $\T^{\epsilon}(\Sigma)\subset \T(\Sigma)$ consisting of surfaces with $\sys(X)\geq \epsilon$. By Mahler's compactness criterion, the corresponding thick part of moduli space $\M^{\epsilon}(\Sigma)$ is compact.

In this paper, we study length inequalities, which here will be upper bounds for lengths of curves or multicurves with given properties. Generally these inequalities will be about a topological type of curve or multicurve: two multicurves $\Gamma$ and $\Gamma'$ are of the same type if there exists $\phi\in \Mod(\Sigma)$ such that $\phi(\Gamma)=\Gamma'$. Given a multicurve $\Gamma$, we can consider its mapping class group orbit. These orbits divide the space of all multicurves into equivalence classes
$$
[\Gamma]:= \{ \phi(\Gamma) \mid \phi \in \Mod(\Sigma)\}
$$
sorted by type. Now given $X\in \T(\Sigma)$, we can study the length of a minimal representative of an equivalence class:
$$
\ell_{[\Gamma]}(X) = \min_{\phi \in \Mod(\Sigma)} \big\{ \ell_X(\phi(\Gamma)\big\}
$$
The existence of a minimum follows from the discreteness of the length spectrum. This function, due to its mapping class group invariance, descends nicely to $\M(\Sigma)$. Note that although the function $\ell_{[\Gamma]}(\cdot)$ remains continuous over $\M(\Sigma)$, it is no longer smooth. This is due to possible changes of homotopy classes realizing the minimum length of in an equivalence class.

The following result is a non-explicit general bound that holds for any topological type of multicurve. The proof is a compactness argument. The constant $K$ in the statement depends on the topology of $\Sigma$, the topological type of $\Gamma$ and a lower bound on the systole.

\begin{proposition}\label{prop:conditionallength}
Let $[\Gamma]$ be a type of multicurve on $\Sigma$. For any $\epsilon>0$, there exists a constant $K=K(\epsilon, \Sigma, [\Gamma])$ such that for any $X\in \T^{\epsilon}(\Sigma)$, we have
$$
\ell_{[\Gamma]}(X) \leq K.
$$
\end{proposition}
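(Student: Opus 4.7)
The plan is a direct compactness argument, following the hint in the paper itself.

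First, I would invoke the two structural facts recalled just before the statement: the thick part $\M^{\epsilon}(\Sigma)$ of moduli space is compact by Mahler's compactness criterion, and the function $\ell_{[\Gamma]}$ is $\Mod(\Sigma)$-invariant by construction (one minimizes over the whole mapping class group orbit), so it descends to a well-defined function on $\M(\Sigma)$. Restricting to $\M^{\epsilon}(\Sigma)$ gives a function on a compact space.

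Second, I would argue that this descended function is continuous on $\M(\Sigma)$, a fact explicitly noted in the preliminaries. The only subtle point is that $\ell_{[\Gamma]}(X)$ is a minimum of continuous length functions $\ell_{\phi(\Gamma)}(X)$ over a discrete set; continuity of the minimum follows from the fact that the length spectrum is discrete (for fixed $X$ only finitely many $\phi(\Gamma)$ have length below any given threshold), so in a neighborhood of any $X_0 \in \T(\Sigma)$ the minimum is taken over a finite subfamily of continuous functions.

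Third, a continuous function on the compact set $\M^{\epsilon}(\Sigma)$ attains its supremum. Call this supremum
\[
K \;=\; K(\epsilon, \Sigma, [\Gamma]) \;:=\; \max_{[X] \in \M^{\epsilon}(\Sigma)} \ell_{[\Gamma]}([X]).
\]
For any $X \in \T^{\epsilon}(\Sigma)$, its projection $[X] \in \M^{\epsilon}(\Sigma)$ satisfies $\ell_{[\Gamma]}([X]) \leq K$, and by $\Mod(\Sigma)$-invariance this equals $\ell_{[\Gamma]}(X)$, giving the desired bound.

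I do not anticipate a serious obstacle: everything reduces to compactness of the thick part together with continuity of $\ell_{[\Gamma]}$. The only thing worth being careful about is that the constant $K$ genuinely depends on all three of $\epsilon$, $\Sigma$, and $[\Gamma]$ — shrinking $\epsilon$ may enlarge the compact set and hence the maximum, and different topological types $[\Gamma]$ give different continuous functions and hence different maxima. This is consistent with the statement and with the later main theorem, where the constant in the unconditional bound (without a systole assumption) exists only for the restricted class of multicurves identified in Theorem~\ref{thm:main}.
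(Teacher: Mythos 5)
Your proposal is correct and takes essentially the same route as the paper: both rely on Mahler compactness of $\M^{\epsilon}(\Sigma)$ together with continuity of the $\Mod(\Sigma)$-invariant function $\ell_{[\Gamma]}(\cdot)$, and then take the maximum. The extra detail you supply about why the minimum over a discrete orbit remains continuous is a reasonable elaboration of a fact the paper simply asserts in the preliminaries.
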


\begin{proof}
The space $\M^{\epsilon}(\Sigma)$ is compact, hence the continuous function $\ell_{[\Gamma]}(\cdot)$ admits a maximum on $\M^{\epsilon}(\Sigma)$. This maximum value is exactly $K$. 
\end{proof}

As an explicit example of the above result, consider the following result due to Buser and S\"eppala \cite{Buser-Seppala}. For a closed surface $\Sigma$ of genus $g\geq 2$, they consider {\it canonical homology bases}, that is collections of $2g$ simple closed curves $\{\alpha_i,\beta_i\}_{1\leq i \leq g}$ that satisfy:
\begin{enumerate}[a)]
\item
$i(\alpha_i,\beta_j)=\delta_{ij}$ for all $i,j\in \{1,\hdots,g\}$ (where $\delta_{ij}$ is the Kronecker delta),
\item
$i(\alpha_i,\alpha_j)=i(\beta_i,\beta_j)=0$ for $i\neq j$. 
\end{enumerate}
Note that such a system of curves automatically generate integer homology. See Figure \ref{fig:homology} for an illustration in genus $2$.

\begin{figure}[htbp]
%\ShowGrid
\leavevmode \SetLabels
\L(.23*.5) $\alpha_1$\\%
\L(.32*.27) $\beta_1$\\%
\L(.74*.5) $\alpha_2$\\%
\L(.66*.27) $\beta_2$\\%
\endSetLabels
\begin{center}
\AffixLabels{\centerline{\includegraphics[width=8cm]{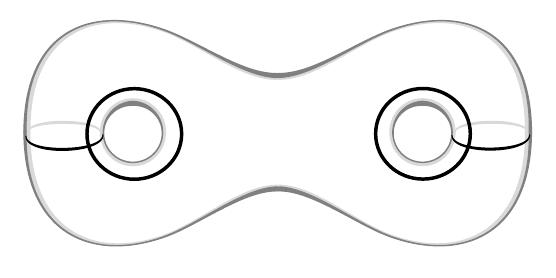}}}
\caption{A canonical homology basis in genus $2$}
\label{fig:homology}
\end{center}
\vspace{-0.8cm}
\end{figure}

They prove that any $X\in \T^{\epsilon}(\Sigma)$ admits a canonical homology basis with all curves of length at most
$$
(g-1)\left(45+6\,\arcsinh\frac{1}{\epsilon}\right).
$$
This is an improvement over an earlier quantification in \cite{Buser-Seppala2}. The above bound shows that the $K$ in this instance is bounded above by 
$$
2g(g-1)\left(45+6\,\arcsinh\frac{1}{\epsilon}\right)
$$
because there are $2g$ curves in the family. Of course, for this to be an exact quantification of the constant $K$, it would have to be sharp (which it is not). Exact quantifications are rarely known but, as proved in \cite{Buser-Seppala}, the real constant must depend on $\epsilon$. Alternatively, this dependency on $\epsilon$ can also be deduced from Theorem \ref{thm:main}.

Naturally, this leads to the existence of upper bounds which only depend on topology and curve type, but not on systole length. A multicurve type $[\Gamma]$ is said to satisfy a strong length inequality if $\ell_{[\Gamma]}(\cdot)$ is upper bounded over $\M(\Sigma)$. By continuity of the length function $\ell_{[\Gamma]}(\cdot)$, this is equivalent to the existence of a surface $X_{\max}\in \M(\Sigma)$ such that 
$$
\max_{X\in \M(\Sigma)} \ell_{[\Gamma]}(X) = \ell_{[\Gamma]}(X_{\max}).
$$
To see this, one must show that the supremum of the length function cannot be reached on the boundary of moduli space, that is on a noded surface. As it turns out, as a consequence of Lemma \ref{lem:stretchpants} from the next section, a (finite) supremum is always reached in the ''thick" part of moduli space, and so in particular the $\sup$ is indeed a $\max$.

\section{Stretching pants, Bers' theorem and consequences}

The following lemma is by now well-known, but a sketch proof is provided for completeness. The proof uses strip maps, introduced by Thurston \cite{Thurston}, and used to great effect by many authors \cite{Danciger-Gueritaud-Kassel, Gueritaud, Papadopoulos-Theret, Parlier} to study of deformations of hyperbolic structures. Recall that a hyperbolic pair of pants is uniquely determined by its cuff lengths. In order to allow pants with cusp boundary, we use the convention that a cusp is a cuff of $0$ length. 

\begin{lemma}[Pants stretching lemma]\label{lem:stretchpants}
Let $Y_{x,y,z}$ be the unique hyperbolic pair of pants with cuff lengths $x,y,z \geq 0$. Then, for any (non-boundary) homotopy class of closed curve $\gamma$ on a pair of pants, and any $\epsilon>0$, we have
$$
\ell_\gamma\left(Y_{x,y,z}\right) < \ell_\gamma\left(Y_{x+\epsilon,y,z}\right)
$$
\end{lemma}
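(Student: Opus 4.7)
The plan is to realize $Y_{x+\epsilon,y,z}$ from $Y_{x,y,z}$ as a Thurston strip map along a carefully chosen orthogeodesic $\alpha$, and to argue that this deformation strictly lengthens every closed geodesic that crosses $\alpha$.

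First, I would take $\alpha$ to be the unique simple geodesic arc on $Y_{x,y,z}$ with both endpoints on the $x$-cuff, orthogonal to the cuff at both endpoints, and separating the $y$-cuff from the $z$-cuff; cutting along $\alpha$ splits the pants into two hyperbolic cylinders, one containing each of the other two cuffs. Now insert a hyperbolic strip of width $w$ --- the region of $\mathbb{H}^2$ between two ultraparallel complete geodesics at common perpendicular distance $w$ --- gluing its two boundary geodesics to the two copies of $\alpha$. Because the $y$- and $z$-cuffs lie in the interiors of the two cylinders, their lengths are unaffected, while the $x$-cuff length is a continuous, strictly monotone function of $w$ sweeping over $(0,\infty)$. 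Choose $w$ so that the new $x$-cuff has length exactly $x+\epsilon$; by uniqueness of pants with prescribed cuffs, the resulting surface is $Y_{x+\epsilon,y,z}$.

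Next, any non-boundary homotopy class $\gamma$ necessarily satisfies $i(\gamma,\alpha)\geq 1$: a homotopy class represented inside one of the cylinders is a power of the $y$- or $z$-cuff, and the case of a power of the $x$-cuff is trivial via $\ell_{C_x^n}=nx<n(x+\epsilon)$. For the remaining cases, I would lift to the universal cover $\mathbb{H}^2$. The strip map there pushes apart each pair of adjacent regions separated by a lift of $\alpha$ by common-perpendicular distance $w$. The axis of the isometry corresponding to $\gamma$ in the deformed structure crosses $i(\gamma,\alpha)$ inserted strips, and each crossing contributes a geodesic segment of length at least $w$ to the translation length --- with strict inequality unless the segment coincides with the strip's common perpendicular. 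Summing over the crossings yields the strict inequality $\ell_\gamma(Y_{x+\epsilon,y,z})>\ell_\gamma(Y_{x,y,z})$.

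The delicate point, which I expect to be the main obstacle, is making the translation-length comparison fully rigorous: one must ensure that the ``straightened'' axis in the deformed $\mathbb{H}^2$ is strictly longer than the original, accounting for all $i(\gamma,\alpha)$ crossings simultaneously and ruling out the borderline case in which every crossing is a common perpendicular. The cleanest route is to argue directly that pushing apart two adjacent fundamental regions by distance $w>0$ strictly increases the $\mathbb{H}^2$-distance between any two points lying on opposite sides of the relevant lift of $\alpha$, and then to apply this pointwise argument to each of the finitely many axis crossings.
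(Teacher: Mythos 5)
Your proposal and the paper's proof rest on the same key construction: a Thurston strip map along the simple orthogeodesic with both endpoints on the $x$-side cuff, which every non-boundary class must cross. The only real difference is the direction in which the strip map is run: you \emph{insert} a strip into $Y_{x,y,z}$ to produce $Y_{x+\epsilon,y,z}$ and argue that translation lengths of crossing classes strictly increase, whereas the paper \emph{removes} a strip from $Y_{x+\epsilon,y,z}$ and projects the arcs of $\gamma$ that cross the strip onto the central orthogeodesic, thereby exhibiting an explicit representative of strictly smaller length on the smaller pants. The paper's direction is marginally cleaner precisely at the point you flag as delicate: to shorten, it suffices to produce \emph{some} shorter representative in the right homotopy class, whereas to lengthen one must bound the new geodesic from below, which is what forces you into the universal-cover translation-length comparison. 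Removing rather than inserting also sidesteps the degenerate case $x=0$, where your arc $\alpha$ runs out to a cusp and the ``ultraparallel strip of width $w$'' picture needs adapting; the paper always takes the orthogeodesic from the cuff of length $x+\epsilon>0$, so a genuine finite orthogeodesic is available. Both proofs are sketchy at the same critical step (the paper says so itself), so modulo these points you have essentially re-derived the same argument run backwards.
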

\begin{proof}[Sketch proof]
Consider on $Y=Y_{x+\epsilon,y,z}$, the simple orthogeodesic $a$ with both endpoints on the boundary curve of length $x+\epsilon$ (see Figure \ref{fig:arc}). 

\begin{figure}[htbp]
%\ShowGrid
\leavevmode \SetLabels
\L(.445*.5) $a$\\%
\endSetLabels
\begin{center}
\AffixLabels{\centerline{\includegraphics[width=5cm]{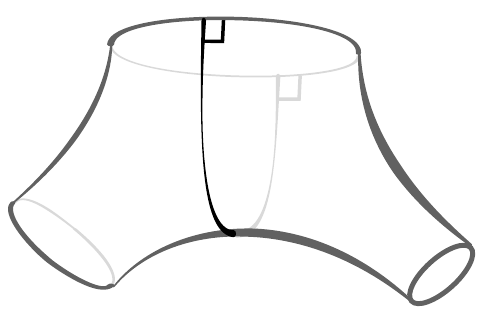}}}
\caption{The orthogeodesic $a$}
\label{fig:arc}
\end{center}
\vspace{-0.8cm}
\end{figure}

Note the closed geodesic $\gamma$ intersects $a$ at least once. Exactly like in the collar lemma, $a$ admits an embedded neighborhood, topologically a strip (see Figure \ref{fig:strip}). The idea is to now remove at least part of this strip to reduce the length of the boundary curve. 

\begin{figure}[htbp]
%\ShowGrid
\leavevmode \SetLabels
\L(.445*.5) $a$\\%
\endSetLabels
\begin{center}
\AffixLabels{\centerline{\includegraphics[width=5cm]{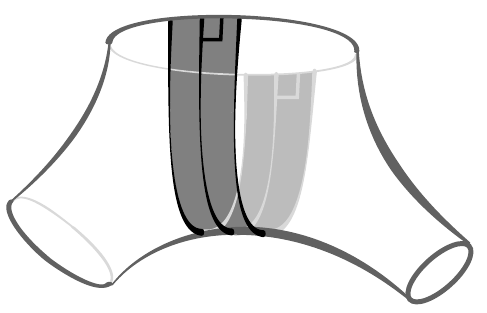}}}
\caption{A strip surrounding $a$}
\label{fig:strip}
\end{center}
\vspace{-0.8cm}
\end{figure}

To do this properly, it is more convenient to consider the complete pair of pants by adding funnels. The arc $a$ can now be extended into a complete simple and infinite length geodesics. In addition, because the boundary curve of length $x+\epsilon$ is not $0$, there is a family of simple complete geodesics parallel to each other and to $a$. We can take any two of these, cut away the strip between them, and paste them together to obtain a (complete) hyperbolic metric, say $Y'$ (see Figure \ref{fig:operation}). (The slightly sketchy part is here: in fact it is possible by a variational argument to show that this can be done such that the length of the new boundary component of $Y'$ is exactly $x$, but we won't dwell on this, the main point being that the boundary length has been reduced.)

\begin{figure}[H]
%\ShowGrid
\leavevmode \SetLabels
%\L(.445*.5) $a$\\%
\endSetLabels
\begin{center}
\AffixLabels{\centerline{\includegraphics[width=10cm]{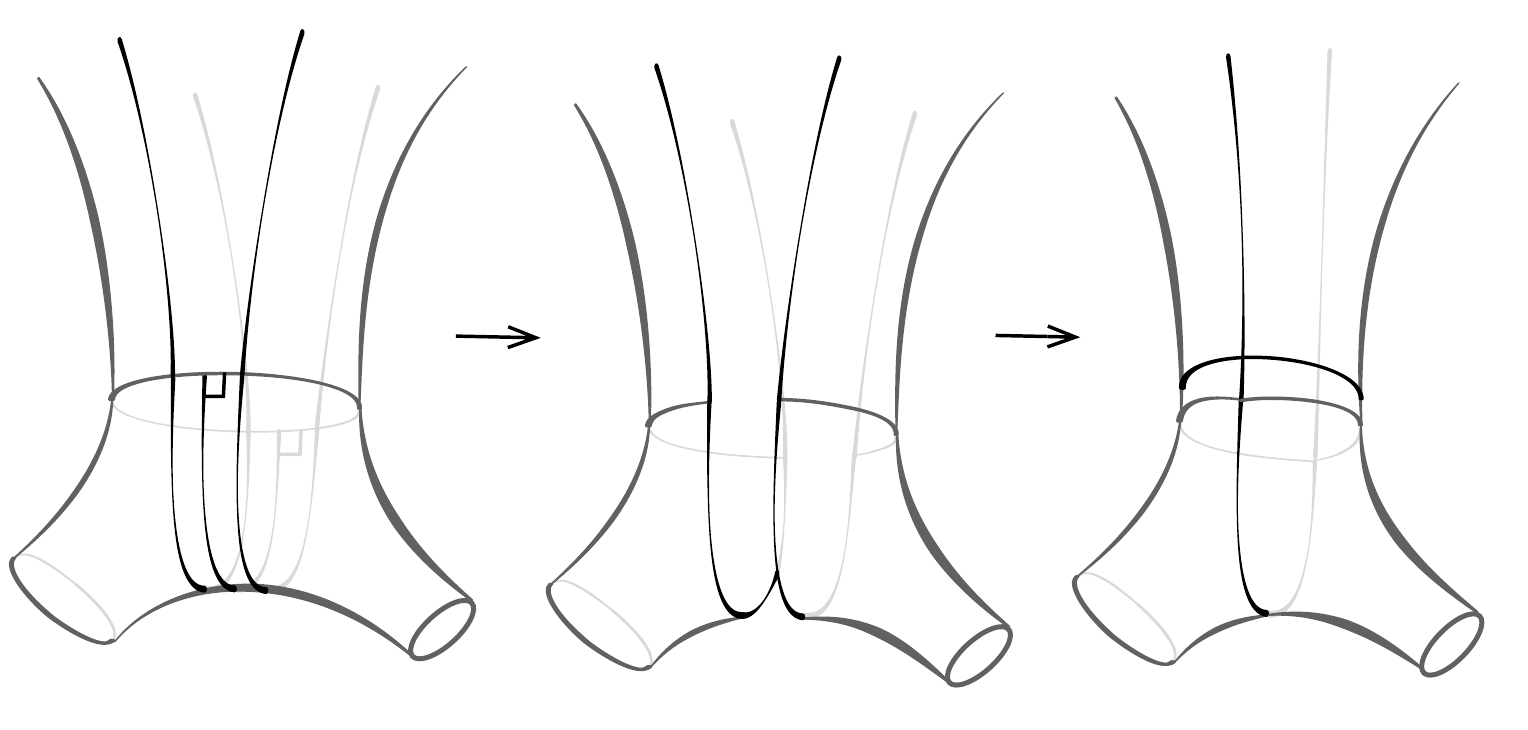}}}
\caption{Removing the strip neighborhood $\mathcal{S}(a)$}
\label{fig:operation}
\end{center}
\vspace{-0.8cm}
\end{figure}

We denote the strip enclosed by these geodesics by $\mathcal{S}(a)$.

Now can analyse the result of this operation on the length of $\gamma$. Note that, due to the topology of the strip neighborhood $\mathcal{S}(a)$ of $a$, $\gamma \cap \mathcal{S}(a)$ is a union of simple geodesic arcs. We look at each one. In order to find a representative of $\gamma$ on $Y'$, we replace each simple geodesic arc with its projection to $a$ (see Figure \ref{fig:project}). The point is that the projection strictly reduces lengths.
\begin{figure}[H]
%\ShowGrid
\leavevmode \SetLabels
%\L(.445*.5) $a$\\%
\endSetLabels
\begin{center}
\AffixLabels{\centerline{\includegraphics[width=5cm]{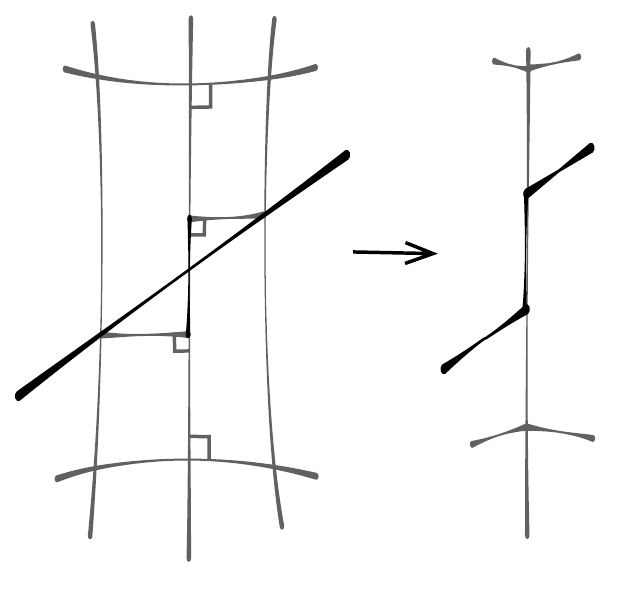}}}
\caption{A local picture of $\gamma$ under the operation}
\label{fig:project}
\end{center}
\vspace{-0.8cm}
\end{figure}
Thus, this results in a curve on $Y'$, in the same homotopy class, and of length strictly smaller. The corresponding geodesic is thus of length strictly smaller than it was previously. Hence we have $\ell_\gamma\left(Y\right) > \ell_\gamma\left(Y'\right)$ as required. 
\end{proof}
To make the proof fully rigorous 
We will mainly need an immediate corollary of the above result.
\begin{corollary}\label{cor:longpants}
For $L>0$, let $Y$ be a pair of pants with cuff lengths between $0$ and $L$. Let $Y_L$ be the pair of pants with all cuff lengths exactly $L$. Then, for any interior homotopy class of curve $\gamma$ on the pair of pants, we have $\ell_\gamma(Y_L) \geq \ell_\gamma(Y)$ with equality only occurring if $Y=Y_L$. 
\end{corollary}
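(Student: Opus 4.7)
The plan is to reduce the corollary to three (or fewer) iterated applications of the pants stretching lemma, one for each cuff. Write $Y = Y_{x_1,x_2,x_3}$ with $0 \leq x_i \leq L$, and $Y_L = Y_{L,L,L}$. I would interpolate through the pants
\[
Y_{x_1,x_2,x_3} \;\rightsquigarrow\; Y_{L,x_2,x_3} \;\rightsquigarrow\; Y_{L,L,x_3} \;\rightsquigarrow\; Y_{L,L,L},
\]
and apply Lemma~\ref{lem:stretchpants} to each consecutive pair. For each step where $x_i < L$, setting $\epsilon = L - x_i > 0$ and invoking the lemma (after permuting the role of the cuffs, which is harmless since the pants stretching lemma is stated for an arbitrary cuff) yields a strict inequality for the length of any fixed non-peripheral homotopy class~$\gamma$. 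For each step where $x_i = L$, there is simply nothing to do and the two pants coincide.

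Chaining the resulting (weak or strict) inequalities gives $\ell_\gamma(Y) \leq \ell_\gamma(Y_L)$. For the equality case, note that if $Y \neq Y_L$ then at least one $x_i$ is strictly less than $L$, and the corresponding step in the chain is a strict inequality by the lemma, which forces $\ell_\gamma(Y) < \ell_\gamma(Y_L)$. Conversely, if $Y = Y_L$, the two pants are isometric and equality holds trivially.

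The only mild subtlety is checking that the pants stretching lemma, as stated, really applies even when some cuff length is zero (a cusp). Since the lemma's hypothesis allows cuff lengths in $[0,\infty)$ and its proof via strip maps is insensitive to whether the other two boundary components are cuffs or cusps, this is not a genuine obstacle. Beyond this, the argument is essentially a three-line telescoping, so I do not anticipate any real difficulty; the content is entirely in Lemma~\ref{lem:stretchpants}.
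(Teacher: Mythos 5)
Your proposal is correct and is precisely the "immediate" iterated application of Lemma~\ref{lem:stretchpants} that the paper has in mind (the paper offers no written proof, calling it an immediate corollary). The telescoping argument, the permutation of cuffs, the handling of the equality case, and the remark about cusps being admissible are all sound.
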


Pants decompositions play an essential role in this story. The following result, originally due to Bers \cite{Bers1,Bers2}, has been since quantified by different authors \cite{BuserBook,BPS,ParlierShort}.

\begin{theorem}[Bers' constants]\label{thm:bers}
There exists a constant $B(\Sigma)$ such that any $X\in \T(\Sigma)$ admits a pants decomposition with all curves of length at most $B(\Sigma)$.
\end{theorem}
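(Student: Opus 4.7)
The plan is to argue by induction on the complexity $\xi(\Sigma) = 3g-3+n$, which counts the curves in any pants decomposition. To make the induction close, one works in the slightly enlarged category of compact hyperbolic surfaces with geodesic boundary, keeping track of the total boundary length $L$. The inductive statement I would prove reads: there is a function $B(\xi, L)$ such that any hyperbolic surface of complexity $\xi$ with total geodesic boundary length $L$ admits a pants decomposition whose interior curves all have length at most $B(\xi, L)$. Setting $L = 0$ (the case where boundary components are replaced by cusps) recovers Theorem \ref{thm:bers}.

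The inductive step rests on producing a non-peripheral simple closed geodesic of controlled length. The standard route is an area argument: since $\mathrm{area}(X) = -2\pi\chi(\Sigma)$ is fixed and the cusps and the collars of the boundary components occupy a quantifiable portion of this area by Lemma \ref{lem:collar}, there must be a point $p$ sitting outside these collars at which the injectivity radius is bounded above by some $r = r(\xi, L)$. Two distinct shortest arcs from $p$ to the frontier of a maximal embedded disk at $p$ concatenate into a geodesic loop of length at most $2r$, representing a nontrivial free homotopy class; because $p$ avoids the collars and the cusp regions, the class is non-peripheral. A standard surgery on self-intersections produces a simple closed geodesic $\gamma$ in that class of length at most $2r$.

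With $\gamma$ in hand, I would cut $X$ along $\gamma$. The resulting surface (possibly disconnected) has strictly smaller complexity and total boundary length at most $L + 2\ell(\gamma) \leq L + 4r$. The inductive hypothesis, applied to each component, supplies pants decompositions whose interior curves have length at most $B(\xi - 1, L + 4r)$. Adjoining $\gamma$ yields a pants decomposition of $X$ with all curves bounded by $\max\bigl(2r,\, B(\xi - 1, L + 4r)\bigr)$, which defines $B(\xi, L)$ recursively; the base case $\xi = 0$ is a pair of pants, requiring no interior curves.

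The main obstacle is guaranteeing that the geodesic loop supplied by the area argument is genuinely non-peripheral, i.e., not freely homotopic to a cusp or to a boundary component. This is exactly where Lemma \ref{lem:collar} does the essential work: it provides embedded cylindrical neighborhoods around thin boundary components and cusp regions whose widths tend to infinity as the boundary length tends to zero. Choosing $p$ outside these neighborhoods ensures that any geodesic loop based at $p$ of length at most $2r$ cannot wind around a boundary component, and hence yields an honest interior simple closed curve after the surgery. Quantifying $r(\xi, L)$ and $B(\xi, L)$ explicitly is the content of the refined versions in \cite{BuserBook,BPS,ParlierShort}.
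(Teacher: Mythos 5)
The paper offers no proof of Theorem~\ref{thm:bers}: it is stated as a known result of Bers, with the quantitative refinements deferred to \cite{BuserBook,BPS,ParlierShort}. So there is nothing in the paper to compare you against; what you have written is the standard inductive proof that appears (in various quantified forms) in those references. The framework --- inducting on complexity $\xi=3g-3+n$ within the enlarged category of surfaces with geodesic boundary, tracking total boundary length, producing a short non-peripheral simple closed geodesic via an area/injectivity-radius estimate, cutting, and recursing --- is exactly right, and the use of the collar lemma to keep the short loop away from peripheral classes is the correct mechanism.

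One step deserves more care than your sketch gives it: the passage from the nontrivial geodesic loop based at $p$ to a \emph{simple, non-peripheral} closed geodesic. Resolving self-intersections by the usual loop surgery at a crossing point produces shorter nontrivial subloops, but a priori one of these subloops could be freely homotopic into a cusp or boundary component even though the original loop was not, and the surgered curve need not pass through $p$ anymore, so the ``$p$ lies outside all collars'' argument does not transfer automatically. The standard way around this (as in \cite{BuserBook}) is either to argue directly that the \emph{shortest} non-peripheral closed geodesic is simple, or to run a slightly different estimate that bounds the length of the shortest non-peripheral \emph{simple} closed geodesic outright; either route closes the gap. You should also say a word about why the collar width $W$ at scale $L$ can be arranged to exceed the radius $r$ you extract, since $r$ is itself defined in terms of the area left over after removing collars --- this is where the explicit collar-width and area formulas do real work and where the circularity has to be broken. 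With those two points tightened, the proof is complete and matches the standard one in the cited literature.
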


Note that this result does not fall in the framework of either Proposition \ref{prop:conditionallength} or Theorem \ref{thm:main}. Indeed, in order to find a short pants decomposition, we are allowed to choose any topological type of pants decomposition. The number of these grows with topology (for instance there are roughly $\sim g^g$ different types when $\Sigma$ is of genus $g$ and $g$ is large). So here we are not only minimizing among mapping class group orbits of a fixed multicurve, but we are minimizing among multicurves that belong to a family. The following result holds for all such families of multicurves. A family will be denoted $\{\Gamma_\alpha\}_{\alpha\in I}$ where $I$ is an index set (possibly infinite, but countable as there are only countably many topological types of finite multicurve on a finite type surface). 

\begin{theorem}\label{thm:maingeneral}
Let $\{\Gamma_\alpha\}_{\alpha\in I}$ be a family of multicurves. Then the quantity 
$$\max_{X\in \M(\Sigma)} \min_{\alpha\in I} \ell_{[\Gamma_\alpha]}(X)$$
is finite if and only if, for every pants decomposition $P$, there exists $\alpha\in I$ and $\phi \in \Mod(\Sigma)$ such that $i(P, \phi(\Gamma_\alpha)) = 0$. 
\end{theorem}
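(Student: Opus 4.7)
The plan is to prove the two directions separately, with the contrapositive for the ``only if'' direction being short (via the collar lemma) and the ``if'' direction being the main work.

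For the contrapositive of ``only if'', I would assume there is a pants decomposition $P = \{\gamma_1, \ldots, \gamma_k\}$ with $i(P, \phi(\Gamma_\alpha)) > 0$ for every $\alpha \in I$ and $\phi \in \Mod(\Sigma)$, and pinch $P$ by taking $\tilde X_n \in \T(\Sigma)$ with $\max_j \ell_{\gamma_j}(\tilde X_n) \to 0$. For any $\alpha$ and $\phi$, some component of $\phi(\Gamma_\alpha)$ intersects some $\gamma_j$, so its geodesic representative crosses the collar of $\gamma_j$ and, by Lemma \ref{lem:collar}, has length at least $2 w\bigl(\ell_{\gamma_j}(\tilde X_n)\bigr)$, which tends to infinity. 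Since this lower bound is uniform in $\alpha$ and $\phi$, the projection $X_n \in \M(\Sigma)$ satisfies $\min_\alpha \ell_{[\Gamma_\alpha]}(X_n) \to \infty$.

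For ``if'', I would first use that $\Mod(\Sigma)$ acts on pants decompositions of $\Sigma$ with only finitely many orbits $\tau_1, \ldots, \tau_N$ (indexed by the combinatorial dual graph). For each $i$, the hypothesis lets me fix a representative $P_i$ of $\tau_i$ and an index $\alpha_i \in I$ with $i(P_i, \Gamma_{\alpha_i}) = 0$. Now for an arbitrary $X \in \M(\Sigma)$ and any lift $\tilde X \in \T(\Sigma)$, Theorem \ref{thm:bers} produces a pants decomposition $Q$ with $\ell_Q(\tilde X) \leq B(\Sigma)$. Let $\tau_i$ be the orbit of $Q$ and pick $\phi \in \Mod(\Sigma)$ with $\phi(Q) = P_i$; then the multicurve $\phi^{-1}(\Gamma_{\alpha_i})$ is disjoint from $Q$. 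Its components split into cuff-parallel ones, each of length $\leq B(\Sigma)$ on $\tilde X$, and interior homotopy classes inside some pair of pants of $Q$, to which I would apply Corollary \ref{cor:longpants} with $L = B(\Sigma)$: each such component's length is bounded by its length in $Y_{B(\Sigma)}$, a finite constant determined by the topological type of the component in its pants. The key point is that this topological type is transported from the corresponding component of $\Gamma_{\alpha_i}$ in $P_i$ via the homeomorphism $\phi^{-1}$ and so depends only on $\tau_i$. Summing over components yields $\ell_{\phi^{-1}(\Gamma_{\alpha_i})}(\tilde X) \leq K_i$, hence $\ell_{[\Gamma_{\alpha_i}]}(X) \leq K_i$, and $K = \max_i K_i$ gives a uniform bound on $\min_\alpha \ell_{[\Gamma_\alpha]}(X)$.

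The main obstacle is controlling the topological type of the components of $\phi^{-1}(\Gamma_{\alpha_i})$ inside the pants of $Q$: a priori, different choices of $X$ and $Q$ could produce arbitrarily complicated such types, preventing a uniform application of Corollary \ref{cor:longpants}. The finiteness of $\Mod(\Sigma)$-orbits on pants decompositions, combined with the freedom to renormalize $\tilde X$ by a mapping class, lets me reduce to a finite list of fixed topological configurations, at which point Corollary \ref{cor:longpants} yields bounds uniformly. Showing that the finite supremum is actually attained follows from the same pants stretching ideas used for Corollary \ref{cor:longpants}, ensuring that the supremum is realized in the compact thick part of $\M(\Sigma)$.
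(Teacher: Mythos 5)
Your proof is correct and follows the same overall strategy as the paper: the collar lemma for the ``only if'' direction (pinch the offending pants decomposition and watch any intersecting curve blow up), and Bers' theorem combined with Corollary~\ref{cor:longpants} for the ``if'' direction. The paper's version of the second half goes: take the Bers pants decomposition $P$ of $X$, find by hypothesis some $\alpha$ and $\phi$ with $i(P,\phi(\Gamma_\alpha))=0$, bound the cuff-parallel components by $B(\Sigma)$, and bound the interior components via Corollary~\ref{cor:longpants} by ``a finite number that depends on $B(\Sigma)$ and the topological type of $\gamma$,'' closing with ``as there are a finite number of such curves, the result follows.''

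Where you genuinely improve on the paper's exposition is in pinning down why the constants obtained this way are uniform over $X$. In the paper's phrasing it is not immediately clear that, as $X$ ranges over $\M(\Sigma)$ and hence $P$, $\alpha$, and $\phi$ all vary, only finitely many topological configurations of a component inside a pair of pants can arise --- and this is the quantity Corollary~\ref{cor:longpants} actually controls. When $I$ is infinite this is not a triviality. Your step of first reducing to the finitely many $\Mod(\Sigma)$-orbits of pants decompositions, fixing one pair $(P_i,\Gamma_{\alpha_i})$ per orbit, and then transporting that fixed configuration to the Bers decomposition of an arbitrary $X$ via a mapping class, makes the uniformity explicit: you literally exhibit a finite list of constants $K_i$ whose maximum works. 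This is the cleanest way to justify the paper's terse ``the result follows,'' and I would regard it as the intended argument made precise rather than a different route. One very small bookkeeping point: the hypothesis produces $\alpha_i$ and a mapping class $\psi_i$ with $i(P_i,\psi_i(\Gamma_{\alpha_i}))=0$; you should fix $\psi_i(\Gamma_{\alpha_i})$ (rather than $\Gamma_{\alpha_i}$ itself) as the multicurve disjoint from $P_i$, which costs nothing since $\ell_{[\Gamma_{\alpha_i}]}$ depends only on the orbit.
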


The statement might seem a little confusing at first, due to the fact that we are taking a maximum among hyperbolic structures of a double minimum (over a family and then over the mapping class group orbit). If the family is reduced to a single curve, the statement becomes Theorem \ref{thm:main} from the introduction. The proofs of both statements are identical, so we prove the more general statement above.

\begin{proof}[Proof of Theorem \ref{thm:maingeneral}]

We begin with the more straightforward direction, showing that if there is a pants decomposition $P$ which is intersected by any mapping class group orbit of any multicurve in the family $\{\Gamma_\alpha\}_{\alpha\in I}$, then there is no upper bound on the length. This follows directly from Lemma \ref{lem:collar} (the collar lemma). Indeed, by considering a sequence of hyperbolic structures with all curves in the pants decomposition of $P$ that converge to $0$, the length of any curve that intersects one of the pants curve necessarily goes to $+\infty$. As, by hypothesis, there is at least one curve in every multicurve $\phi(\Gamma_\alpha)$, for all $\phi \in \Mod(\Sigma)$ and all $\alpha \in I$, that intersects a curve in $P$, the result follows. 

The other direction will follow from Bers' theorem (Theorem \ref{thm:bers}) and Corollary \ref{cor:longpants} above.

Take any $X\in \T(\Sigma)$. By Bers' theorem, there exists a pants decomposition of $X$, say $P$, with all curves of length at most $B(\Sigma)$. Now, by hypothesis, there exists $\Gamma_\alpha$ such that $i(\Gamma_\alpha,P)= 0$. Note that $\Gamma_\alpha$ can contain curves from $P$, but none that intersect curves of $P$ transversally. 

By hypothesis all curves that belong to both $\Gamma_\alpha$ and $P$ are upper bounded by $B(\Sigma)$. We now the corollary to all remaining curves of $\Gamma_\alpha$. Indeed, any such a curve $\gamma \in \Gamma_\alpha$ is contained in a pair of pants. By hypothesis, the cuff lengths of this pair of pants are at most $B(\Sigma)$. By Corollary \ref{cor:longpants}, $\ell_X(\gamma)$ is at most the length when the pair of pants has all cuff lengths equal to $B(\Sigma)$, which is some finite number that depends on $B(\Sigma)$ and the topological type of $\gamma$. As there are a finite number of such curves, the result follows.
\end{proof}

There are many instances of the above theorem, each obtained by changing the multicurve or family of multicurves. Note that only the topological type of a multicurve matters in the statement. Some of these results are in the positive direction: the theorem implies that there is an upper bound on a minimal length representative that only depends on the topology of $\Sigma$. Others are in the negative direction, that is that certain multicurves, or families of multicurves, do not admit such upper bounds. We give a (non-exhaustive) list of results of this type.

\begin{enumerate}[I.]
\item Even though it was used in the proof, Bers' theorem is an example obtained by taking the family of multicurves to be the full set of pants decompositions (or simply one pants decomposition for each topological type). To put in the framework of the theorem, the length of a pants decomposition should be defined as the sum, and not the maximum value. 

The quantification of the implied constants - for both the sum and the max - has attracted some attention over the years, but it does not seem to be an easy problem. In fact, even the rough growth in terms of genus is not known \cite{BuserBook,ParlierShort,GPY}. On the positive side, the exact constant in genus $2$ is known by a result of Gendulphe \cite{Gendulphe}, as is the rough growth in terms of the number of cusps \cite{Balacheff-Parlier,BPS}. 

\item One can also take the full set of all topological types of all multicurves: this boils down to the systolic inequality. Quantifying the exact constants that appear is an arduous task. For orientable closed surfaces, the only constant known is again in genus $2$ \cite{Jenni}. Buser and Sarnak \cite{Buser-Sarnak} showed that the constants must grow logarithmically in genus, and since then there have been multiple variations and refinements of this (see for instance \cite{BFR,KSV,Fanoni-Parlier, SchmutzSchaller}), including generalizations in the world of variable curvature and higher dimensional manifolds \cite{Gromov}.

\item A slight variation of the above is to look at the homological systole of a closed surface $\Sigma$. That is, the shortest curve that is not only non-trivial in homotopy, but also in homology. By cut and paste arguments, the homological systole is always realized by a non-separating simple closed curve. Hence, it is always in the same mapping class orbit. As above, few optimal constants are known. However, for closed surfaces, it is known that the optimal constants are equal to those from the systolic inequality. \cite{ParlierPapa} Said differently, the systole of a maximal surface is homologically non-trivial.

\item In addition to the results of Buser and Sepp\"al\"a mentioned previously, one can try and bound families of homologically independent curves, but that do not necessarily form (part of) a canonical basis. If one requests a full homology basis, by the theorem above, there is no upper bound on its length over moduli space. However, by Bers' theorem, and the observation that any pants decomposition contains $g$ homologically independent curves, one can find an upper bound on the length of up until $g$ curves by a function of topology. So what about $g+1$ curves?

Gromov observed \cite[Section 5]{GromovFilling} that any minimal length homology basis consists of simple curves that pairwise intersect at most once. Now given $g+1$ homologically independent and simple curves, there must be at least a pair that intersect. And a pair of intersecting simple curves necessarily intersects {\it all} pants decompositions. Hence by the theorem above, there is no upper bound for such a family and so strong length inequality stops at exactly $g$ homologically independent curves. 

More precise quantifications of the constants have also attracted attention. In particular the Buser-Sarnak logarithmic bound can be extended to roughly $ag$ curves for any $a<1$ \cite{BPS}. 

Finally note that Gromov's observation above (on the intersection properties of minimal bases) still forces one to consider multiple, although finite, topological types of multicurves.

\item In a somewhat opposite direction, consider $\{\Gamma\}_\alpha$ to be the set of separating simple closed curves on a closed surface $\Sigma$ of genus $g$. Then, as there exists a pants decomposition consisting only of non-separating curves, it will essentially intersect any element of the mapping class group orbit of any element of $\Gamma$. Hence, for all $\alpha\in I$, the function $\ell_{[\Gamma_\alpha]}(\cdot)$ admits no upper bound over $\M(\Sigma)$. If however one takes the larger set $\{\Gamma\}_\alpha$ of all homologically trivial curves (but not homotopically trivial), then it is a consequence of a theorem of Sabourau \cite{Sabourau} that this admits an upper bound that only depends on genus. This subtle difference lead to a small gap in Mirzakhani's study of the expected value of the shortest {\it simple} homologically trivial curve in \cite{Mirzakhani}, but this was not the difficult part of her work and has since been cleared up \cite{PWX}.

\item For multicurves containing more than one curve, many of these problems admit variations. Here we suppose the length of a multicurve is the sum of lengths of its components, but of course one could also consider other variations, such as the $\max$ or the product. For instance, the product of lengths of homologically distinct curves was studied in \cite{Balacheff-Karam-Parlier}. Replacing the sum of the lengths with the maximum will also satisfy the same boundedness condition of Theorem \ref{thm:maingeneral}, but products - or other functions of the lengths of the components - need to be checked on a case-by-case basis. 
\end{enumerate}
%------------------------------------------------
% Insert acknowledgments and information 
% regarding funding at the end of the last 
% section, i.e., right before the bibliography.
%------------------------------------------------

\ack{This work was supported by the Luxembourg National Research Fund OPEN grant O19/13865598.}

%------------------------------------------------
% Insert the bibliography here.
% Order the items alphabetically by author.
% Abbreviate titles of journals and book series 
%% as in zbMath or Mathematical Reviews. 
% The preferred style is 'amsplain'.
%------------------------------------------------


\begin{thebibliography}{99}

\bibitem{Balacheff-Karam-Parlier} Balacheff, Florent, Karam, Steve and Parlier, Hugo. The minimal length product over homology bases of manifolds. Math. Ann. 380 (2021), no. 1-2, 825--854.

\bibitem{Balacheff-Parlier} Balacheff, Florent and Parlier, Hugo. Bers' constants for punctured spheres and hyperelliptic surfaces. J. Topol. Anal. 4 (2012), no. 3, 271--296.

\bibitem{BPS} Balacheff, Florent, Parlier, Hugo and Sabourau, St\'ephane. Short loop decompositions of surfaces and the geometry of Jacobians. Geom. Funct. Anal. 22 (2012), no. 1, 37--73.

\bibitem{BasmajianCollar} Basmajian, Ara. The stable neighborhood theorem and lengths of closed geodesics. Proc. Amer. Math. Soc. 119 (1993), no. 1, 217--224.

\bibitem{BasmajianUniversal} Basmajian, Ara. Universal length bounds for non-simple closed geodesics on hyperbolic surfaces. J. Topol. 6 (2013), no. 2, 513--524.

\bibitem{Bers1} Bers, Lipman. Spaces of degenerating Riemann surfaces. Discontinuous groups and Riemann surfaces (Proc. Conf., Univ. Maryland, College Park, Md., 1973), pp. 43--55. Ann. of Math. Studies, No. 79, Princeton Univ. Press, Princeton, N.J., 1974.

\bibitem{Bers2} Bers, Lipman. An inequality for Riemann surfaces. Differential geometry and complex analysis, 87--93, Springer, Berlin, 1985.

\bibitem{BFR} Bourque, Maxime Fortier and Rafi, Kasra. Local maxima of the systole function. J. Eur. Math. Soc. (JEMS), to appear. 

\bibitem{Brock} Brock, Jeffrey F. The Weil-Petersson metric and volumes of 3-dimensional hyperbolic convex cores. J. Amer. Math. Soc. 16 (2003), no. 3, 495--535.

\bibitem{BuserBook} Buser, Peter. Geometry and spectra of compact Riemann surfaces. Progress in Mathematics, 106. Birkh\"auser Boston, Inc., Boston, MA, 1992.

\bibitem{Buser-Sarnak} Buser, Peter and Sarnak, Peter. On the period matrix of a Riemann surface of large genus. Invent. Math. 117 (1994), no. 1, 27--56.

\bibitem{Buser-Seppala} Buser, Peter and Sepp\"al\"a, Mika. Triangulations and homology of Riemann surfaces. Proc. Amer. Math. Soc. 131 (2003), no. 2, 425--432.

\bibitem{Buser-Seppala2} Buser, Peter and Sepp\"al\"a, Mika. Short homology bases and partitions of Riemann surfaces. Topology 41 (2002), no. 5, 863--871.

\bibitem{Cavendish-Parlier} Cavendish, William and Parlier, Hugo. Growth of the Weil-Petersson diameter of moduli space. Duke Math. J. 161 (2012), no. 1, 139--171.

\bibitem{Danciger-Gueritaud-Kassel} Danciger, Jeffrey, Gu\'eritaud, Fran\c{c}ois, and Kassel, Fanny. Margulis spacetimes via the arc complex. Invent. Math. 204 (2016), no. 1, 133--193.

\bibitem{Erlandsson-Souto} Erlandsson, Viveka and Souto, Juan. Mirzakhani's Curve Counting and geodesic currents. Progress in Mathematics, Springer Birkh\"auser, to appear. 

\bibitem{Fanoni-Parlier} Fanoni, Federica and Parlier, Hugo. Systoles and kissing numbers of finite area hyperbolic surfaces. Algebr. Geom. Topol. 15 (2015), no. 6, 3409--3433.

\bibitem{Gendulphe} Gendulphe, Matthieu. Constante de Bers en genre 2. (French) [Bers constant of genus 2] Math. Ann. 350 (2011), no. 4, 919--951.

\bibitem{Gueritaud} Gu\'eritaud, Fran\c{c}ois. Strip maps of small surfaces are convex. Illinois J. Math. 60 (2016), no. 1, 19--37.

\bibitem{GPY} Guth, Larry, Parlier, Hugo and Young, Robert. Pants decompositions of random surfaces. Geom. Funct. Anal. 21 (2011), no. 5, 1069--1090.

\bibitem{GromovFilling} Gromov, Mikhael. Filling Riemannian manifolds. J. Differential Geom. 18 (1983), no. 1, 1--147. 

\bibitem{Gromov} Gromov, Misha. Metric structures for Riemannian and non-Riemannian spaces. Based on the 1981 French original. With appendices by M. Katz, P. Pansu and S. Semmes. Translated from the French by Sean Michael Bates. Reprint of the 2001 English edition. Modern Birkh\"auser Classics. Birkh\"auser Boston, Inc., Boston, MA, 2007.

\bibitem{Jenni} Jenni, Felix.
\"Uber den ersten Eigenwert des Laplace-Operators auf ausgew\"ahlten Beispielen kompakter Riemannscher Fl\"achen [On the first eigenvalue of the Laplace operator on selected examples of compact Riemann surfaces]. Comment. Math. Helv. 59 (1984), no. 2, 193--203. 

\bibitem{KSV} Katz, Mikhail G., Schaps, Mary and Vishne, Uzi. Logarithmic growth of systole of arithmetic Riemann surfaces along congruence subgroups. J. Differential Geom. 76 (2007), no. 3, 399--422.

\bibitem{Keen} Keen, Linda. Collars on Riemann surfaces. Discontinuous groups and Riemann surfaces (Proc. Conf., Univ. Maryland, College Park, Md., 1973), pp. 263--268. Ann. of Math. Studies, No. 79, Princeton Univ. Press, Princeton, N.J., 1974.

\bibitem{Kerckhoff} Kerckhoff, Steven P. The Nielsen realization problem. Bull. Amer. Math. Soc. (N.S.) 2 (1980), no. 3, 452--454.

\bibitem{Mirzakhani} Mirzakhani, Maryam. Growth of Weil-Petersson volumes and random hyperbolic surfaces of large genus. J. Differential Geom. 94 (2013), no. 2, 267--300.

\bibitem{Papadopoulos-Theret} Papadopoulos, Athanase and Th\'eret, Guillaume. Shortening all the simple closed geodesics on surfaces with boundary. Proc. Amer. Math. Soc. 138 (2010), no. 5, 1775--1784.

\bibitem{Parlier} Parlier, Hugo. Lengths of geodesics on Riemann surfaces with boundary. Ann. Acad. Sci. Fenn. Math. 30 (2005), no. 2, 227--236. 

\bibitem{ParlierPapa} Parlier, Hugo. The homology systole of hyperbolic Riemann surfaces. Geom. Dedicata 157 (2012), 331--338.

\bibitem{ParlierShort} Parlier, Hugo. A short note on short pants. Canad. Math. Bull. 57 (2014), no. 4, 870--876.

\bibitem{PWX} Parlier, Hugo, Wu, Yunhui and Xue, Yuhao. The simple separating systole for hyperbolic surfaces of large genus. J. Inst. Math. Jussieu, to appear.

\bibitem{Rafi} Rafi, Kasra. A combinatorial model for the Teichmüller metric. Geom. Funct. Anal. 17 (2007), no. 3, 936--959.

\bibitem{Rafi-Tao} Rafi, Kasra and Tao, Jing. The diameter of the thick part of moduli space and simultaneous Whitehead moves. Duke Math. J. 162 (2013), no. 10, 1833--1876.

\bibitem{Sabourau} Sabourau, St\'ephane. Asymptotic bounds for separating systoles on surfaces. Comment. Math. Helv. 83 (2008), no. 1, 35--54.

\bibitem{SchmutzSchaller} Schmutz Schaller, Paul. Geometry of Riemann surfaces based on closed geodesics. Bull. Amer. Math. Soc. (N.S.) 35 (1998), no. 3, 193--214.

\bibitem{Thurston} Thurston, William. Minimal stretch maps between hyperbolic surfaces, preprint, 1986.

\bibitem{Wolpert} Wolpert, Scott A. Geodesic length functions and the Nielsen problem. J. Differential Geom. 25 (1987), no. 2, 275--296.

\end{thebibliography}
\end{document}